\newtheorem{theorem}{Theorem}
\newtheorem{lemma}[theorem]{Lemma}
\newtheorem{corollary}[theorem]{Corollary}
\theoremstyle{definition}
\newtheorem{example}[theorem]{Example}
\newtheorem{remark}[theorem]{Remark}
\begin{document}

\title[Central polynomials]
{Central polynomials for matrices\\
over finite fields}

\author[Matej Bre\v{s}ar and Vesselin Drensky]
{Matej Bre\v{s}ar and Vesselin Drensky}
\address{Faculty of Mathematics and Physics, University of Ljubljana, and Faculty of Natural Sciences and Mathematics,
University of Maribor, Slovenia}
\email{matej.bresar@fmf.uni-lj.si}
\address{Institute of Mathematics and Informatics,
Bulgarian Academy of Sciences,
1113 Sofia, Bulgaria}
\email{drensky@math.bas.bg}
\thanks
{The research of the first named author was partially
supported by the Slovenian Research Agency (Program No. P1-0288)}

\subjclass[2010]
{16R10, 16R30.}
\keywords{Central polynomials, matrix algebras.}

\begin{abstract} Let $c(x_1,\ldots,x_d)$ be a multihomogeneous central polynomial for the $n\times n$ matrix algebra
$M_n(K)$ over an infinite field $K$ of positive characteristic $p$. We show that there exists a multihomogeneous polynomial
$c_0(x_1,\ldots,x_d)$ of the same degree and with coefficients in the prime field ${\mathbb F}_p$ which is central for
the algebra $M_n(F)$ for any (possibly finite) field $F$ of characteristic $p$. The proof is elementary and uses standard combinatorial
techniques only.
\end{abstract}

\maketitle

\centerline{\it Dedicated to Ed Formanek on the occasion of his 70th birthday}

\section{Introduction}

Let $K$ be a field. The polynomial $c(x_1,\ldots,x_d)$ in the free associative algebra
$K\langle X\rangle=K\langle x_1,x_2,\ldots\rangle$ is called a central polynomial
for the $K$-algebra $R$ if $c(x_1,\ldots,x_d)$, when evaluated on $R$, belongs to the center of $R$,
i.e.,
\[
[c(x_1,\ldots,x_d),x_{d+1}]:=c(x_1,\ldots,x_d)x_{d+1}-x_{d+1}c(x_1,\ldots,x_d)=0
\]
is a polynomial identity for $R$
and $c(x_1,\ldots,x_d)=0$ is not.
Usually one requires that $c(x_1,\ldots,x_d)$
is multihomogeneous, i.e., homogeneous in each variable $x_1,\ldots,x_d$, in order to avoid the trivial case
$c=\text{constant}+\text{polynomial identity}$.
This is not a restriction when $K$ is infinite because standard Vandermonde arguments give that every polynomial
identity is equivalent to a set of multihomogeneous identities.
Answering the famous problem of Kaplansky \cite{K1, K2}, in 1972-73 Formanek \cite{F} and Razmyslov \cite{R}
independently constructed central polynomials for the $n\times n$ matrix algebras $M_n(K)$, for any size $n$ and any field $K$.
This led to a significant revision of structure theory of algebras with polynomial identity.
The construction of Formanek is based on generic matrices and the fact that the eigenvalues of
a generic matrix are pairwise different.
For a finite field $K$ the proof involves an additional argument that $M_n(K)$ contains a matrix with pairwise different eigenvalues,
which is obvious for $K$ infinite.
The central polynomial of Razmyslov is multilinear. Hence it is sufficient to show that it has scalar values
when evaluated on the matrix units only, which does not depend on the cardinality of $K$.
Before, in 1969, Latyshev and Shmelkin \cite{LS} constructed a central (clearly nonhomogeneous) polynomial in one variable
for the matrix algebra $M_n(K)$ over a finite field $K$.
See, e.g., the book of Drensky and Formanek \cite{DF}
for the importance of central polynomials for the theory and for different proofs of their existence, as well as
for a background on PI-algebras.

Kharchenko \cite{Kh} in 1979, Braun \cite{Br} in 1982, and recently Bre\v{s}ar \cite{B1, B2} in 2011-12,
gave nonconstructive proofs of the existence of central polynomials when the base field $K$ is infinite.
The main ideas of the proofs are similar. But Kharchenko and Braun used old results by Amitsur from the 1950's,
before Kaplansky stated his problem,
while Bre\v{s}ar's proof is more self-contained and uses techniques typical for
 generalized polynomial identities and functional identities.

The purpose of this note is to show that if $K$ is an infinite field of positive characteristic $p$
and $c(x_1,\ldots,x_d)$ is a multihomogeneous central polynomial for $M_n(K)$,
then there exists a multihomogeneous polynomial
$c_0(x_1,\ldots,x_d)$ of the same degree and with coefficients in the prime field ${\mathbb F}_p$ which is central for
the algebra $M_n(F)$ for any (possibly finite) field $F$ of characteristic $p$. The proof is elementary and uses only standard combinatorial
techniques. It also gives  an algorithm how to find $c_0$ if we know $c$.
In this way we complete the nonconstructive proofs in \cite{Kh}, \cite{Br},
and \cite{B1, B2},
removing the requirement of the infinity of the field $K$.

\section{The main result}

We fix an infinite field $K$ of any characteristic. If $f(x_1,\ldots,x_d)\in K\langle X\rangle$
is a multihomogeneous polynomial and $\deg_{x_1}f=m$, we consider
the polynomial
\[
f(y_{11}+\cdots+y_{1m},x_2,\ldots,x_d)=\sum_a f_{1,a}(y_{11},\ldots,y_{1m},x_2,\ldots,x_d)
\]
in $K\langle X,y_{11},\ldots,y_{1m}\rangle$,
where $a=(a_1,\ldots,a_m)$ and the polynomial $f_{1,a}$ is homogeneous of degree
$a_i\geq 0$ in $y_{1i}$, $i=1,\ldots,m$.
If $a=b^{(1)}=(b_{11},\ldots,b_{1k_1},0,\ldots,0)$, where $b_{11},\ldots,b_{1k_1}>0$, we call the polynomial $f_{1,a}$
a partial linearization of $f$ with respect to $x_1$ and use the notation
\[
f_{1,a}=f(b^{(1)}:y_{11},\ldots,y_{1k_1}\vert x_2,\ldots,x_d).
\]
Continuing this process with the other variables $x_2,\ldots,x_d$, we obtain all partial linearizations
\[
f(b:y_{ij})=f(b^{(1)}:y_{11},\ldots,y_{1k_1}\vert\ldots\vert b^{(d)}:y_{d1},\ldots,y_{dk_d})
\]
of $f$.
The main tool of our approach is an elementary lemma. It mimics the following facts:
\begin{enumerate}
\item[$\bullet$]
In characteristic 0 every polynomial identity is equivalent to a set of multilinear polynomial identities.
\item[$\bullet$]
In order to check that a multilinear polynomial is an identity for an algebra it is sufficient to evaluate it on a basis of the algebra.
\end{enumerate}
If $K$ is of positive characteristic $p$,
there is a much stronger version of the lemma which states that it is sufficient to consider only partial linearizations $f(b:y_{ij})$
such that all degrees $\deg_{y_{ij}}f(b:y_{ij})$ are powers of the characteristic $p$. The proof is given in
Drensky \cite[Theorem 1.4]{D} for Lie algebras, see also \cite[Theorem 4.2.6, p. 144 of the Russian original]{Ba},
with the same arguments working for associative algebras.
To make transparent the technical notation in the proof of the lemma,
we give a small example.

\begin{example}
Let $f(x_1)=x_1^3$. Then $f(y_1+y_2+y_3)$ is equal to
\begin{eqnarray*}
&&y_1^3+y_2^3+y_3^3\\
&+&(y_1^2y_2+y_1y_2y_1+y_2y_1^2)+(y_1^2y_3+y_1y_3y_1+y_3y_1^2)
+(y_2^2y_3+y_2y_3y_2+y_3y_2^2) \\
&+&(y_1y_2^2+y_2y_1y_2+y_2^2y_1) +
(y_1y_3^2+y_3y_1y_3+y_3^2y_1)+(y_2y_3^2+y_3y_2y_3+y_3^2y_2)\\
&+&(y_1y_2y_3+y_1y_3y_2+y_2y_1y_3+y_2y_3y_1+y_3y_1y_2+y_3y_2y_1).
\end{eqnarray*}
The partial linearizations are
\begin{eqnarray*}
f((3):y_1)&=&y_1^3,\\
f((2,1):y_1,y_2)&=&y_1^2y_2+y_1y_2y_1+y_2y_1^2,\\
f((1,2):y_1,y_2)&=&y_1y_2^2+y_2y_1y_2+y_2^2y_1,\\
f((1,1,1):y_1,y_2,y_3)&=&y_1y_2y_3+y_1y_3y_2+y_2y_1y_3+y_2y_3y_1+y_3y_1y_2+y_3y_2y_1.
\end{eqnarray*}
If $R$ is an algebra with basis $V=\{v_j\mid j=1,2,\ldots\}$,
then $r\in R$ can be written as
$r=\xi_1v_1+\cdots+\xi_nv_n$, where $\xi_j\in K$.
In the case of three summands, if
$r=\xi_1v_1+\xi_2v_2+\xi_3v_3$, then
\begin{eqnarray*}
f(r)&=&f(\xi_1v_1+\xi_2v_2+\xi_3v_3)\\
&=&\xi_1^3f((3):v_1)+\xi_2^3f((3):v_2)+\xi_3^3f((3):v_3)\\
&+&\xi_1^2\xi_2f((2,1):v_1,v_2)+\xi_1^2\xi_3f((2,1):v_1,v_3)
+\xi_2^2\xi_3f((2,1):v_2,v_3)\\
&+&\xi_1\xi_2^2f((1,2):v_1,v_2)+\xi_1\xi_3^2f((1,2):v_1,v_3)
+\xi_2\xi_3^2f((1,2):v_2,v_3)\\
&+&\xi_1\xi_2\xi_3f((1,1,1):v_1,v_2,v_3).
\end{eqnarray*}
In the general case, if $r=\sum\xi_jv_j$, then
\[
f(r)=\sum_{(b_1,\ldots,b_k)}\sum_{j_1<\ldots<j_k}\xi_{j_1}^{b_1}\cdots\xi_{j_k}^{b_k}f((b_1,\ldots,b_k):v_{j_1},\ldots,v_{j_k}).
\]
Hence the evaluations of $f$ on $R$ are linear combinations of evaluations of the partial linearizations of $f$ on the basis of $R$.
\end{example}

\begin{lemma}\label{equivalent system of PIs}
Let $f(x_1,\ldots,x_d)\in K\langle X\rangle$ be a multihomogeneous polynomial, the field $K$ being infinite.
Let $R$ be a PI-algebra with basis $V=\{v_{\alpha}\mid \alpha\in J\}$ as a $K$-vector space. Then $f=0$ is a polynomial identity
for $R$ if and only if
$f(b:v_{\alpha_{ij}})=0$ for all partial linearizations $f(b: y_{ij})$ and all $\alpha_{ij}\in J$.
\end{lemma}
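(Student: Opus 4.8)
The plan is to prove both directions of the equivalence, with the forward direction being essentially trivial and the reverse direction carrying the content. The example preceding the lemma already displays the crucial identity: for any $r=\sum_{\alpha}\xi_\alpha v_\alpha\in R$ (a finite sum), expanding $f(r_1,\ldots,r_d)$ by multilinearity in each slot yields
\[
f(r_1,\ldots,r_d)=\sum_{b}\sum_{\alpha_{ij}}\xi_{\alpha_{11}}^{b_{11}}\cdots\,f(b:v_{\alpha_{ij}}),
\]
where the coefficients $\xi$ come from $K$ and the inner evaluations $f(b:v_{\alpha_{ij}})$ lie in $R$. This is the decomposition I would record first, since it drives everything.

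For the \emph{if} direction, suppose $f(b:v_{\alpha_{ij}})=0$ for every partial linearization and every choice of basis vectors. Then in the expansion above every term on the right-hand side vanishes, so $f(r_1,\ldots,r_d)=0$ for all $r_1,\ldots,r_d\in R$; that is, $f=0$ is an identity for $R$. This step is immediate once the expansion is in place, so I would keep it brief.

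For the \emph{only if} direction, assume $f=0$ is an identity for $R$. I want to recover each individual partial linearization. The key point is that $f(b:y_{ij})$ is \emph{itself} a multihomogeneous partial linearization of $f$, obtained by the substitution $x_i\mapsto y_{i1}+\cdots+y_{ik_i}$ and extracting the component of the prescribed multidegree $b$. Since $K$ is infinite, the standard Vandermonde argument applies: substituting $y_{ij}\mapsto t_{ij}\,z_{ij}$ with scalars $t_{ij}\in K$ into the identity $f(\sum_j y_{1j},\ldots,\sum_j y_{dj})=0$ and comparing coefficients of the distinct monomials $\prod t_{ij}^{b_{ij}}$ (which are linearly independent as functions on $K$ because $K$ is infinite) forces each homogeneous component $f(b:z_{ij})=0$ to be an identity for $R$ separately. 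Finally, because $f(b:z_{ij})=0$ holds as an identity on all of $R$, in particular it holds when each $z_{ij}$ is specialized to a basis vector $v_{\alpha_{ij}}$, giving $f(b:v_{\alpha_{ij}})=0$ as claimed.

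The main obstacle is purely bookkeeping: managing the multi-index notation $b=(b^{(1)},\ldots,b^{(d)})$ across all $d$ variables simultaneously, so that the Vandermonde separation is applied variable by variable without the indices colliding. There is no genuine mathematical difficulty beyond the infinitude of $K$, which is exactly what licenses the coefficient-comparison step; the worked example already exhibits the one-variable case in full, so the general statement is a matter of iterating that computation cleanly.
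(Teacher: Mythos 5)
Your proposal is correct and follows essentially the same route as the paper: both directions rest on the expansion of $f(r_1,\ldots,r_d)$ as a $K$-linear combination of evaluations $f(b:v_{\alpha_{ij}})$ of partial linearizations on basis elements, together with the standard Vandermonde argument (valid since $K$ is infinite) showing that the identities $f(b:y_{ij})=0$ are consequences of $f=0$. The only cosmetic differences are that you prove the "if" direction directly where the paper argues by contraposition, and you spell out the coefficient-comparison step that the paper merely cites as standard.
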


\begin{proof}
Since the field $K$ is infinite, the identities $f(b: y_{ij})=0$ are consequences of the polynomial identity $f=0$.
If $f=0$ is an identity for $R$, then all $f(b: y_{ij})$ vanish on $R$ and hence on its basis $V$. Let us assume that
$f=0$ is not a polynomial identity for $R$ and $f(r_1,\ldots,r_d)\not=0$ for some $r_1,\ldots,r_d\in R$. We write the elements
$r_i$ as finite sums of the form
\[
r_i=\xi_{i1}v_{\alpha_{i1}}+\cdots+\xi_{in_i}v_{\alpha_{in_i}},\quad \xi_{ij}\in K,\alpha_{ij}\in J.
\]
Then $f(r_1,\ldots,r_d)$ can be expressed as a linear combination of evaluations of the partial linearizations on the basis $V$ of $R$ as
\[
\sum_b\sum\prod_{i=1}^d\prod_{j=1}^{k_i}\xi_{iq_{ij}}^{b_{ij}}
f(b^{(1)}:v_{\alpha_{1q_{11}}},\ldots,v_{\alpha_{1q_{1k_1}}}\vert\ldots\vert b^{(d)}:v_{\alpha_{dq_{d1}}},\ldots,y_{\alpha_{dq_{dk_d}}}),
\]
where the outer sum runs over all partial linearizations of $f$ and the inner sum is over all
$q_{11}<\cdots<q_{1k_1},\ldots,q_{d1}<\cdots<q_{dk_d}$. Since $f(r_1,\ldots,r_d)\not=0$ we have that some
$f(b:v_{\alpha_{iq_j}})$ is also different from 0.
\end{proof}

Let $S$ be a subfield of the field $K$ and let $A$ be an $S$-algebra with basis $V=\{v_{\alpha}\mid \alpha\in J\}$. Then the tensor product
$_KA=K\otimes_SA$ is a $K$-algebra with basis $1\otimes V=\{1\otimes v_{\alpha}\mid \alpha\in J\}$. We shall identify the elements
$1\otimes v_{\alpha}$ and $v_{\alpha}$, and shall consider $A$ as an $S$-subalgebra of $_KA$.
Similarly, we shall consider the free $S$-algebra $S\langle X\rangle$ as an $S$-subalgebra of the free $K$-algebra $K\langle X\rangle$.
Hence every nonzero polynomial $f\in K\langle X\rangle$ has the form
$f=\eta_1f_1+\cdots+\eta_qf_q$, where $f_1,\ldots,f_q\in S\langle X\rangle$
and the elements $\eta_1,\ldots,\eta_q$ of $K$ are linearly independent over $S$.

\begin{theorem}\label{main theorem}
Let $S$ be a subfield of an infinite field $K$ and let $A$ be an $S$-algebra. Let
$f=\eta_1f_1+\cdots+\eta_qf_q$ be a  multihomogeneous polynomial in $K\langle X\rangle$,
where $f_1,\ldots,f_q\in S\langle X\rangle$
and $\eta_1,\ldots,\eta_q\in K$ are linearly independent over $S$.

{\rm (i)} If $f=0$ is a polynomial identity for the $K$-algebra $_KA$, then $f_t=0$ are polynomial identities for
the $F$-algebra $_FA$ for all $t=1,\ldots,q$ and all extensions $F$ of the field $S$.

{\rm (ii)} If $f$ is a central polynomial for $_KA$, then at least one $f_i$ is a central polynomial for
$_FA$ for all extensions $F$ of $S$.
\end{theorem}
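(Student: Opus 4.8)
The plan is to establish (i) first and derive (ii) from it. Fix an $S$-basis $V=\{v_\alpha\mid\alpha\in J\}$ of $A$, which is at the same time a $K$-basis of ${}_KA$ and an $F$-basis of ${}_FA$ for every extension $F$ of $S$. Because $f$ is multihomogeneous and the $\eta_t$ are linearly independent over $S$, each $f_t$ is multihomogeneous of the same multidegree as $f$, and since partial linearization is a $K$-linear operation one has $f(b:y_{ij})=\sum_t\eta_tf_t(b:y_{ij})$ for every partial linearization. Assuming $f=0$ on ${}_KA$, Lemma~\ref{equivalent system of PIs}, applied over the infinite field $K$, gives $f(b:v_{\alpha_{ij}})=0$ in ${}_KA$ for all partial linearizations and all basis arguments. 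The key separation step is now to observe that each $f_t(b:v_{\alpha_{ij}})$ is built from products of basis vectors with coefficients in $S$, hence lies in $A$; writing $f_t(b:v_{\alpha_{ij}})=\sum_\beta s_{t,\beta}v_\beta$ with $s_{t,\beta}\in S$ and using that the $v_\beta$ are $K$-independent, the relation $\sum_t\eta_ts_{t,\beta}=0$ holds for each $\beta$, and the $S$-independence of the $\eta_t$ forces every $s_{t,\beta}=0$. Thus $f_t(b:v_{\alpha_{ij}})=0$ in $A$ for all $t$.

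It remains to pass from the vanishing of all partial linearizations of $f_t$ on $V$ to $f_t=0$ on ${}_FA$, and here I would invoke only the field-independent direction of the lemma. The combinatorial expansion displayed in the Example writes $f_t(r_1,\ldots,r_d)$, for arbitrary $r_i\in{}_FA$, as an $F$-linear combination of the elements $f_t(b:v_{\alpha_{ij}})\in A$; if all of these vanish, then $f_t$ vanishes on ${}_FA$. This direction needs no Vandermonde argument and so survives the passage to a finite $F$, which is exactly the point. This proves (i).

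For (ii) I would apply (i) to the commutator. Writing $[f,x_{d+1}]=\sum_t\eta_t[f_t,x_{d+1}]$, a multihomogeneous polynomial with $S$-components $[f_t,x_{d+1}]$ and the same independent $\eta_t$, the centrality of $f$ means $[f,x_{d+1}]=0$ on ${}_KA$, so (i) gives that each $[f_t,x_{d+1}]=0$ on ${}_FA$ for every $t$ and every $F$; that is, every $f_t$ takes central values on every ${}_FA$. For the non-identity requirement, note that $f$, being central, is not an identity for ${}_KA$; as $K$ is infinite, Lemma~\ref{equivalent system of PIs} yields a partial linearization and basis arguments with $f(b:v_{\alpha_{ij}})\neq0$, and decomposing this value as in (i) the $S$-independence of the $\eta_t$ singles out an index $t_0$ with $z:=f_{t_0}(b:v_{\alpha_{ij}})\neq0$ in $A$.

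The main obstacle is this last step: turning this into the statement that $f_{t_0}$ is a non-identity for every, possibly finite, $F$, uniformly in $F$. The delicate point is that the implication ``a partial linearization is nonzero on $V$ $\Rightarrow$ the polynomial is a non-identity'' is the hard direction of the lemma and genuinely relies on having infinitely many scalars, so it cannot simply be rerun over a finite $F$; indeed over a finite $F$ a component can be an identity while one of its partial linearizations is not. The way I would handle this is to use that the witness $z$ is a fixed nonzero element of the $S$-subalgebra $A$, and $A$ sits inside ${}_FA$ for every $F$, so nonvanishing is inherited from a subalgebra rather than re-established over $F$. Concretely this certifies that the partial linearization $f_{t_0}(b:y_{ij})$ --- which has coefficients in $S$, the same total degree, and is central-valued as a linearization of the central-valued $f_{t_0}$ --- is a non-identity for every ${}_FA$, since that only requires the single basis evaluation $f_{t_0}(b:v_{\alpha_{ij}})=z\neq0$. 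I expect reconciling this with the literal assertion about $f_{t_0}$ itself, rather than about a partial linearization of it, to be where the argument must be handled with the most care.
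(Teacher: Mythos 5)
Your part (i) is correct and is essentially the paper's own argument: decompose each partial linearization as $f(b:y_{ij})=\eta_1f_1(b:y_{ij})+\cdots+\eta_qf_q(b:y_{ij})$, use that the basis evaluations $f_t(b:v_{\alpha_{ij}})$ lie in $A$ together with the $S$-independence of $\eta_1,\ldots,\eta_q$ to conclude that they all vanish, and then climb back up to arbitrary evaluations on ${}_FA$ using only the expansion formula, which is field-independent. Your explicit remark that only this direction of Lemma~\ref{equivalent system of PIs} survives over a finite $F$ is a precision that the paper leaves implicit.

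The interesting divergence is in part (ii), and there the situation is the opposite of what you feared: the step you refused to take is exactly the step the paper takes, with no justification. Having produced $t_0$ with $z=f_{t_0}(b:v_{\alpha_{ij}})\neq 0$, the paper concludes in one line that ``$f_{t_0}$ is not a polynomial identity'' for ${}_FA$, hence central for every extension $F$ of $S$. For infinite $F$ this follows from Lemma~\ref{equivalent system of PIs} applied over $F$; for finite $F$ --- the only case that matters for the intended application --- it is precisely the converse direction of the lemma, which genuinely needs many scalars. So you have not missed an idea present in the paper; you have located a gap in it. Moreover the gap cannot be repaired at the level of generality of the statement, because (ii) as written fails for non-unital algebras. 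Let $N$ be the free non-unital $\mathbb{F}_2$-algebra on $u,v$ truncated at degree $4$, put $P=u^2v+uvu+vu^2$, $Q=uv^2+vuv+v^2u$, and $B=N/W$ where $W=\mathrm{span}\{u^3,\,v^3,\,P+Q\}$ (an ideal, since the degree-$3$ component annihilates $N$ on both sides). For $r\in{}_FB$ with degree-one part $au+bv$ one computes $r^3=ab(a+b)P$. Hence $x^3$ is a polynomial identity for $B={}_{\mathbb{F}_2}B$, while for any $K$ with more than two elements it takes nonzero values, all of them central, on ${}_KB$; so $x^3$ is a central polynomial for ${}_KB$. Taking $f=f_1=x^3$, $q=1$, $\eta_1=1$, $S=\mathbb{F}_2$ contradicts (ii). Your fallback conclusion --- that the partial linearization $f_{t_0}(b:y_{ij})$, rather than $f_{t_0}$ itself, is a central polynomial for every ${}_FA$ --- is therefore the correct form of the theorem; in the example it is $y_1^2y_2+y_1y_2y_1+y_2y_1^2$, which is indeed central for every ${}_FB$.

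Two caveats on your repair. First, you justify central-valuedness of $f_{t_0}(b:y_{ij})$ ``as a linearization of the central-valued $f_{t_0}$''; over a finite $F$ that inference is an instance of the very principle you correctly forbade. Argue instead from part (i) applied to $[f,x_{d+1}]$: its proof shows that all basis evaluations $[f_t(b:v_{\alpha_{ij}}),v_\beta]$ vanish, so every $f_t(b:v_{\alpha_{ij}})$ commutes with the basis and hence is central in every ${}_FA$; then every value of $f_{t_0}(b:y_{ij})$ on ${}_FA$ is an $F$-linear combination of basis evaluations of refinements of that linearization, hence central. Second, the repair weakens Corollary~\ref{central polynomials}: one still obtains a multihomogeneous central polynomial over $\mathbb{F}_p$ for all $M_n(F)$ of the same total degree, but generally in more variables and of a different multidegree than $c$; the corollary as literally stated (and the theorem, for unital algebras such as $M_n$) would require an additional argument that the paper does not supply.
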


\begin{proof} Let $f(b:y_{ij})$ and $f_t(b:y_{ij})$ be the partial linearizations of $f$ and $f_t$, $t=1,\ldots,q$, respectively.
It is clear from the process of partial linearization that
\[
f(b:y_{ij})=\eta_1f_1(b:y_{ij})+\cdots+\eta_qf_q(b:y_{ij}).
\]
The evaluations of $f(b:y_{ij})$ on a basis $V=\{v_{\alpha}\mid \alpha\in J\}$ of $A$ give
\[
f(b:v_{\alpha_{ij}})=\eta_1f_1(b:v_{\alpha_{ij}})+\cdots+\eta_qf_q(b:v_{\alpha_{ij}}).
\]
The evaluation $f_t(b:v_{\alpha_{ij}})$ of $f_t(b:y_{ij})\in S\langle y_{ij}\rangle$ belongs to $A$
and $\eta_1,\ldots,\eta_q\in K$ are linearly independent over $S$. Hence, if $f(b:v_{\alpha_{ij}})=0$, then
$f_t(b:v_{\alpha_{ij}})=0$ for all $t=1,\ldots,q$.

(i) If $f=0$ is a polynomial identity for $_KA$, then $f(b:y_{ij})$ vanishes evaluated on the basis $V$ of $A$
because $A\subset{}_KA$. Hence the evaluations of $f_t(b:y_{ij})$ also vanish on $V$ for all $t=1,\ldots,q$.
If $F$ is an extension of $S$, then the evaluations of $f_t$ on $_FA$ are linear combinations of
evaluations of $f_t(b:y_{ij})$ on the basis $V$ of $A$ which is a basis also of $_FA$.
Applying Lemma \ref{equivalent system of PIs} we obtain that $f_t=0$
is a polynomial identity for $_FA$.

(ii) If $f=f(x_1,\ldots,x_d)$ is a central polynomial for $_KA$, then the commutator
$[f,x_{d+1}]$ is a polynomial identity for $_KA$. Since
\[
[f,x_{d+1}]=\eta_1[f_1,x_{d+1}]+\cdots+\eta_q[f_q,x_{d+1}],
\]
part (i) of the theorem gives that $[f_t,x_{d+1}]=0$
is a polynomial identity for $_FA$ for any $t=1,\ldots,q$.
This means that
$f_t$ is either a central polynomial or a polynomial identity for $_FA$. Since $f=0$ is not a polynomial identity for $_KA$,
by Lemma \ref{equivalent system of PIs}
there exists a nonzero evaluation $f(b:v_{\alpha_{ij}})$. Hence $f_t(b:v_{\alpha_{ij}})\not=0$ for some $t$ and $f_t$ is not a polynomial identity.
Hence it is a central polynomial for $_FA$ for all extensions $F$ of $S$.
\end{proof}

\begin{corollary}\label{central polynomials}
Let $c(x_1,\ldots,x_d)$ be a multihomogeneous central polynomial for
$M_n(K)$ over an infinite field $K$ of positive characteristic $p$. There exists a multihomogeneous polynomial
$c_0(x_1,\ldots,x_d)$ of the same degree and with coefficients in the prime field ${\mathbb F}_p$ which is central for
the algebra $M_n(F)$ for any field $F$ of characteristic $p$.
\end{corollary}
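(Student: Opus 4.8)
The plan is to deduce the corollary directly from part (ii) of Theorem \ref{main theorem} by making the right choice of the base field $S$ and the base $S$-algebra $A$. Since $K$ has characteristic $p$, the prime field $\mathbb{F}_p$ is a subfield of $K$, so I would set $S=\mathbb{F}_p$ and $A=M_n(\mathbb{F}_p)$. The matrix units $e_{ij}$ form an $\mathbb{F}_p$-basis of $A$, and under the identification described before the theorem one has ${}_KA=K\otimes_{\mathbb{F}_p}M_n(\mathbb{F}_p)=M_n(K)$, with the same matrix units serving as a $K$-basis. Thus the given central polynomial $c$ for $M_n(K)$ is precisely a central polynomial for ${}_KA$.

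First I would write $c$ in the form $c=\eta_1f_1+\cdots+\eta_qf_q$ furnished by the discussion preceding the theorem, with $f_1,\ldots,f_q\in\mathbb{F}_p\langle X\rangle$ and $\eta_1,\ldots,\eta_q\in K$ linearly independent over $\mathbb{F}_p$. Here I would record the observation that each $f_t$ is again multihomogeneous of the same multidegree as $c$: since $c$ is multihomogeneous, every monomial occurring in it has the same multidegree, and grouping the $K$-coefficients according to the fixed $\mathbb{F}_p$-independent family $\{\eta_t\}$ merely distributes these monomials among the $f_t$ without altering their multidegree. In particular each $f_t$ has coefficients in $\mathbb{F}_p$ and degree equal to that of $c$.

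Now Theorem \ref{main theorem}(ii) applies and yields an index $i$ such that $f_i$ is a central polynomial for ${}_FA$ for every extension field $F$ of $S=\mathbb{F}_p$. The point making this index uniform in $F$ is that, in the proof of the theorem, $i$ is selected from a single nonzero evaluation $f(b:v_{\alpha_{ij}})$ taken over the base algebra $A$ itself, and so is independent of $F$. Finally, every field $F$ of characteristic $p$ is an extension of its prime field $\mathbb{F}_p$, and ${}_FA=M_n(F)$ by the same base-change identification as above. Therefore $c_0:=f_i$ is central for $M_n(F)$ for every field $F$ of characteristic $p$; being one of the $f_t$, it is multihomogeneous of the same degree as $c$ with coefficients in $\mathbb{F}_p$, as required (and it is genuinely central, not a polynomial identity, since ``central polynomial'' in part (ii) already excludes the identity case).

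The only genuine content beyond invoking the theorem is verifying the two base-change identifications ${}_KM_n(\mathbb{F}_p)=M_n(K)$ and ${}_FM_n(\mathbb{F}_p)=M_n(F)$, together with the bookkeeping that the $f_t$ inherit multihomogeneity. Both are routine, so I do not expect a serious obstacle: essentially all the work of the corollary has been front-loaded into Theorem \ref{main theorem}, and the corollary is its specialization to $S=\mathbb{F}_p$ and $A=M_n(\mathbb{F}_p)$.
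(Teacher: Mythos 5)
Your proposal is correct and follows exactly the paper's own argument: the paper likewise writes $c=\eta_1c_1+\cdots+\eta_qc_q$ with $c_t\in\mathbb{F}_p\langle X\rangle$ and $\eta_t$ linearly independent over $\mathbb{F}_p$, invokes the identification $M_n(K)={}_KM_n(\mathbb{F}_p)$, and applies Theorem \ref{main theorem} to conclude that some $c_t$ is central for $M_n(F)$ for every field $F$ of characteristic $p$, noting as you do that each $c_t$ inherits the multidegree of $c$. Your additional remarks (uniformity of the index $i$ in $F$, and that part (ii) already excludes the identity case) are accurate glosses on the theorem's proof rather than deviations from it.
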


\begin{proof}
Let $c=\eta_1c_1+\cdots+\eta_qc_q$, where $c_t\in{\mathbb F}_p\langle X\rangle$
and $\eta_1,\ldots,\eta_q\in K$ are linearly independent over ${\mathbb F}_p$.
Since $M_n(K)={}_KM_n({\mathbb F}_p)$, Theorem \ref{main theorem}
gives that at least one of the polynomials $c_t$ is central for the algebra $M_n(F)$.
Clearly, $c_t=c_t(x_1,\ldots,x_d)$ is multihomogeneous of the same multidegree as $c(x_1,\ldots,x_d)$.
\end{proof}

\begin{remark}
If the field $K$ is constructive, the proof of Theorem \ref{main theorem} gives an algorithm how to construct the central polynomial
$c_0\in {\mathbb F}_p\langle X\rangle$ if we know $c\in K\langle X\rangle$. We write $c$ in the form
$c=\eta_1c_1+\cdots+\eta_qc_q$ with $\eta_1,\ldots,\eta_q\in K$ linearly independent over ${\mathbb F}_p$, form all
partial linearizations $c_t(b:y_{ij})$ and evaluate them on the matrix units $e_{q_iq_j}$ of $M_n({\mathbb F}_p)$.
There exists a $c_t(b:e_{q_iq_j})$ which is not zero. Then the corresponding $c_t$ is a central polynomial.
\end{remark}

\end{document}